\documentclass[11pt]{amsart}

\usepackage[utf8]{inputenc}
\usepackage{mathtools}
\usepackage{amsmath}
\usepackage{amsfonts}
\usepackage{fancyhdr}
\usepackage{amssymb}
\usepackage{amsthm}
\usepackage[margin= 1 in]{geometry}
\usepackage{subcaption}
\usepackage[colorlinks=true, pdfstartview=FitV, linkcolor=blue, citecolor=blue, urlcolor=blue]{hyperref}

\usepackage{xcolor}

\usepackage{tikz}
\usetikzlibrary{cd}
\usepackage{ytableau}
\usepackage{makecell}
\usepackage{bm}

\usepackage[colorinlistoftodos]{todonotes}

% Dark red emphasis
\newcommand{\defn}[1]{{\color{darkred}\emph{#1}}} % emphasis of a definition
\definecolor{darkblue}{rgb}{0.0,0,0.7} % darkblue color
\definecolor{darkred}{rgb}{0.7,0,0} % darkred color
\definecolor{darkgreen}{rgb}{0, .6, 0} % darkgreen color

\newcommand{\rank}{\mathsf{rank}}
\newcommand{\inv}{\mathsf{inv}}
\newcommand{\PF}{\mathsf{PF}}
\newcommand{\tsum}{\mathsf{sum}}
\newcommand{\LT}{\mathsf{LT}}

\newtheorem{theorem}{Theorem}[section]

\newtheorem{corollary}[theorem]{Corollary}
\newtheorem{conjecture}[theorem]{Conjecture}

\newtheorem{proposition}[theorem]{Proposition}

\newtheorem{remark}[theorem]{Remark}
\numberwithin{equation}{section}

\title{A Note on the Log-Concavity of Parking Functions}

\author[J.~Pappe]{Joseph Pappe}
\address[J. Pappe]{Department of Mathematics, Colorado State University, Fort Collins, CO, U.S.A.}
\email{joseph.pappe@colostate.edu}

\date{\today}
\keywords{parking functions, labeled trees, log-concavity, matroids}
\subjclass[2010]{Primary 05A15, 05C30; Secondary 05C05, 05C31}

\date{\today}

\begin{document}

\begin{abstract}
We settle a conjecture of B\'ona regarding the log-concavity of a certain statistic on parking functions by utilizing recent log-concavity results on matroids. This result allows us to also prove that connected, labeled graphs graded by their number of edges are log-concave. Furthermore, we generalize these results to $G$-parking functions.
\end{abstract}

\maketitle 

%%%%%%%%%%%%%%%%%%%%%%%%%%%%%%%%%%%%%%%%%%%%%%%%%%%%
\section{Introduction}
%%%%%%%%%%%%%%%%%%%%%%%%%%%%%%%%%%%%%%%%%%%%%%%%%%%%

A \defn{parking function} of length $n$ is a sequence $(a_1, a_2, \ldots, a_n)$ of positive integers such that its nondecreasing rearrangement $b_1 \leq b_2 \leq \ldots \leq b_n$ satisfies $b_{i} \leq i$ for all $1\leq i \leq n$. Originally defined by Konheim and Weiss ~\cite{KonWei.1966} in their study of linear probes of random hash functions, this combinatorial object has appeared in many combinatorial and algebraic contexts. See for instance the survey of Yan ~\cite{Yan.2015}. The most pertinent connection for the purpose of this paper is their relationship with labeled trees. In their original paper ~\cite{KonWei.1966}, Konheim and Weiss established that the cardinality of $\PF_n$, the set of parking functions of length $n$, is given by $(n+1)^{n-1}$. As such, parking functions of length $n$ are in bijection with labeled trees on $n+1$ vertices.

A natural statistic to define on a parking function $\pi = (a_1,a_2,\ldots,a_n) \in \PF_n$ is the sum of its entries denoted $\tsum(\pi) = a_1+a_2+\ldots+a_n$. This statistic has been studied under many equivalent formulations. For instance, it is essentially equivalent to the displacement ~\cite{Yan.2015} and area ~\cite{Haglund.2008} statistic on parking functions, the latter of which being pivotal to the study of the space of diagonal harmonics ~\cite{CarMel.2018, Haiman.1994}. With respect to labeled trees, the $\tsum$ statistic on parking functions is related to the number of inversions on labeled trees. Given a tree $T$ with vertex set $\{0, 1, \ldots, n\}$, an \defn{inversion} is an ordered pair $(i,j)$ of vertices such that $0 < i < j$ and $j$ lies on the unique path from $i$ to $0$. Denote by $\inv(T)$ the total number of inversions in $T$ and let $I_{n+1}(x) \coloneqq \sum_{T \in \LT_{n+1}} x^{\inv(T)}$  where $\LT_{n+1}$ is the set of labeled trees on the vertex set $\{0, 1, \ldots, n\}$. Inversions on labeled trees are known to be related to the sum statistic on parking functions as follows
\begin{equation} ~\label{eq:pftree}
	P_n(x) \coloneqq \sum_{\pi \in \PF_n} x^{\tsum(\pi)} = x^{\binom{n+1}{2}} I_{n+1}(x^{-1}).
\end{equation}
 For a bijective proof of this formula see ~\cite{Yan.2015}.
 
A sequence of real numbers $a_0, a_1, \ldots, a_n$ is said to be \defn{log-concave} if $(a_i)^2 \geq a_{i-1}a_{i+1}$ for all $0 < i < n$. Log-concave sequences are of interest as any log-concave sequence of positive real numbers is also \defn{unimodal}, i.e. $a_0 \leq a_1 \leq \ldots \leq a_m \geq a_{m+1} \geq \ldots \geq a_n$ for some $0\leq m \leq n$. We say that a generating function is log-concave (resp. unimodal) if its sequence of coefficients is log-concave (resp. unimodal). In a grant proposal from 2000, B\'ona conjectured the following:
\begin{conjecture} \cite{Bona} \label{conj.main}
The polynomial $P_n(x)$ is log-concave for all $n\geq 1$.
\end{conjecture}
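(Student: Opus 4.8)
The plan is to translate Conjecture~\ref{conj.main} entirely into the language of matroids and then apply a recent log-concavity theorem. First I would use \eqref{eq:pftree} to reduce the problem to the inversion enumerator: the coefficient sequence of $P_n(x)$ is, up to the harmless shift $x^{\binom{n+1}{2}}$ and the substitution $x \mapsto x^{-1}$, simply the reversal of the coefficient sequence of $I_{n+1}(x)$, and log-concavity of a nonnegative sequence is preserved under reversal, so it suffices to prove that $I_{n+1}(x)$ is log-concave. The next step is to recognize $I_{n+1}(x)$ as a specialization of the Tutte polynomial of the complete graph. A classical identity (which I would verify in the cases $n=1,2,3$ and then cite) gives $I_{n+1}(x) = T_{K_{n+1}}(1,x)$, where $T_G$ denotes the Tutte polynomial; equivalently, via the basis-activity expansion $T_M(1,x)=\sum_{B} x^{e(B)}$, the coefficient of $x^{k}$ counts spanning trees of $K_{n+1}$ of external activity $k$.

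The heart of the argument is to reinterpret these coefficients so that a known theorem controls them. Writing $M=M(K_{n+1})$ for the graphic matroid and $M^{\ast}$ for its dual (the cographic matroid), Tutte--Whitney duality gives $T_{M}(1,x)=T_{M^{\ast}}(x,1)$, while the standard identity $T_{N}(x,1)=\sum_{A \text{ independent}}(x-1)^{\rank(N)-|A|}$ exhibits $T_{M^{\ast}}(x,1)$ as the $h$-polynomial of the independence complex of $M^{\ast}$. Thus the coefficients of $I_{n+1}(x)$ are, up to reversal, precisely the $h$-vector of $M^{\ast}$. I would then invoke the recent resolution, via the Hodge-theoretic and Lagrangian machinery for matroids, of the conjecture that the $h$-vector of a matroid independence complex is log-concave, applied to $M^{\ast}$; this immediately yields log-concavity of $I_{n+1}(x)$ and hence of $P_n(x)$.

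The main obstacle, and the reason the problem is genuinely about the $h$-vector rather than the more elementary $f$-vector, is the following. One is tempted to deduce Conjecture~\ref{conj.main} from the easier log-concavity of the number of independent sets of a matroid by size (Mason's conjecture, now a theorem through the theory of Lorentzian polynomials). But the passage from independent-set counts (the $f$-vector) to external-activity counts (the $h$-vector) is the $\mathbf{f}$-to-$\mathbf{h}$ transform, which involves signs; concretely it corresponds to the substitution $x \mapsto x-1$, and log-concavity of a nonnegative sequence is \emph{not} preserved under such a shift. Hence the naive reduction fails and one really needs the deeper $h$-vector theorem. Care must also be taken with the duality bookkeeping, since it is the $h$-vector of the \emph{co}graphic matroid, not the graphic one, that appears.

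Finally, the two advertised generalizations follow from the same circle of ideas. For connected labeled graphs graded by number of edges, I would observe that the number of connected spanning subgraphs of $K_{n+1}$ with $k$ edges equals the number of size-$k$ spanning sets of $M$, equivalently the number of size-$(\binom{n+1}{2}-k)$ independent sets of $M^{\ast}$; here Mason's theorem (ultra-log-concavity of the independent sets of a matroid) together with reversal gives log-concavity directly, so this statement rests on the $f$-vector rather than the $h$-vector. For $G$-parking functions, I would replace $K_{n+1}$ by an arbitrary connected graph $G$ and use the known identification of the degree generating function of $G$-parking functions with $T_{G}(1,x)$ up to reversal; since graphic and cographic matroids are regular, both the $h$-vector theorem and Mason's theorem apply verbatim, yielding log-concavity for every connected $G$.
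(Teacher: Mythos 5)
Your proposal is correct and follows essentially the same route as the paper: reduce via Equation~\eqref{eq:pftree} to the inversion enumerator, identify it with $T_{M(K_{n+1})}(1,x)$, dualize to $T_{M(K_{n+1})^{\perp}}(x,1)$, and invoke the matroid log-concavity theorem --- your $h$-vector formulation for the independence complex of $M^{\perp}$ is exactly the statement that $T_{M^{\perp}}(x,1)$ is log-concave, i.e.\ Theorem~\ref{thm:lcmatroid}. The only cosmetic difference is that the paper applies Huh's representable-case result (using that graphic matroids and their duals are representable over a field of characteristic zero), whereas you invoke the Ardila--Denham--Huh generality for all matroids, which the paper's remark notes is not needed here.
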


In recent times, there has a been a interest in the log-concavity of sequences stemming from matroid theory. Within the past decade, several of the largest outstanding log-concavity conjectures for matroid invariants have been resolved, see Eur's survey ~\cite{Eur.2024} for a more comprehensive story. Given these recent results, the log-concavity of the polynomial $I_{n}(x)$ will follow from associating the polynomial to an invariant of an appropriate matroid. From this, B\'ona's conjecture then follows by Equation ~\ref{eq:pftree}. After settling the log-concavity of $P_n(x)$, we show that the edge enumerator of labeled, connected graphs is also log-concave. Moreover, we generalize these results to the setting of Postnikov and Shapiro's $G$-parking functions ~\cite{PostSha.2004}.

The paper is organized as follows. In Section ~\ref{section.background} we review the necessary background information on matroids and relevant log-concavity results. The main results are then proved in Section ~\ref{section.results}.

%%%%%%%%%%%%%%%%%%%%%%%%%%%%%%%%%%%%%%%%%%%%%%%%%%%%
\subsection*{Acknowledgments}
The author would like to thank Mikl\'os B\'ona, Maria Gillespie, and Eugene Gorsky for helpful discussions.

%%%%%%%%%%%%%%%%%%%%%%%%%%%%%%%%%%%%%%%%%%%%%%%%%%%%
\section{Background on Matroids}
\label{section.background}
%%%%%%%%%%%%%%%%%%%%%%%%%%%%%%%%%%%%%%%%%%%%%%%%%%% %     

A \defn{matroid} $M$ consists of a pair $(E, \mathcal{I})$ where $E$ is a finite set of objects and $\mathcal{I}$ is a nonempty collection of subsets of $E$ satisfying the following conditions:
\begin{enumerate}
	\item (hereditary property) if $J \in \mathcal{I}$ and $I\subseteq J$, then $I\in\mathcal{I}$
	\item (exchange property) if $I,J \in \mathcal{I}$ and $|I| < |J|$, then there exists $j \in J\backslash I$ such that $I\cup\{j\} \in \mathcal{I}$
\end{enumerate}
The set $E$ is known as the \defn{ground set} of $M$ and $\mathcal{I}$ is the family of \defn{independent sets} of $M$. An important family of matroids comes from graphs. Given a graph $G$ with edge set $E_{G}$, let $\mathcal{I}_{G}$ be the collection of acyclic subsets of $E$ (i.e. $\mathcal{I}$ is the collection of forests of $G$). It is a classical result that for any graph $G$, $(E_G, \mathcal{I}_G)$ is always a matroid. We define this matroid to be the \defn{graphical matroid} of the graph $G$ and denote it by $M(G) = (E_G, \mathcal{I}_G)$.

A matroid $M$ is said to be \defn{representable} over a field $\mathbb{F}$ if it can be realized as $(E',\mathcal{I}')$ where $E'$ is a set of vectors in a vector space over $\mathbb{F}$ and $\mathcal{I}'$ is the collection of linearly independent subsets of $E'$. We will exploit the following relationship between graphical and representable matroids.

\begin{proposition} \cite[Proposition 5.1.2]{Oxley.2011}
	Let $G$ be a graph and $M(G)$ its graphical matroid. Then $M(G)$ is representable over every field.
\end{proposition}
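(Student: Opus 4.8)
The plan is to produce a single representation of $M(G)$ that is valid over an arbitrary field $\mathbb{F}$ simultaneously, namely the incidence (boundary) representation attached to an orientation of $G$. First I would fix an arbitrary orientation of each edge and let $V$ denote the vertex set of $G$. Inside the $\mathbb{F}$-vector space $\mathbb{F}^V$ with standard basis $\{\chi_w : w \in V\}$, I assign to each edge $e$ directed from $u$ to $v$ the vector $\mathbf{x}_e \coloneqq \chi_v - \chi_u$. Setting $E' \coloneqq \{\mathbf{x}_e : e \in E_G\}$ and letting $\mathcal{I}'$ be the collection of its linearly independent subsets, it then suffices to prove that for every $S \subseteq E_G$, the set $S$ is acyclic in $G$ if and only if $\{\mathbf{x}_e : e \in S\}$ is linearly independent over $\mathbb{F}$. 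This exhibits an isomorphism $M(G) \cong (E', \mathcal{I}')$ and hence representability over $\mathbb{F}$.

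For the forward implication I would argue the contrapositive. If $S$ contains a cycle $v_0, v_1, \ldots, v_k = v_0$, then traversing the cycle and assigning to each of its edges the coefficient $+1$ or $-1$ according to whether its fixed orientation agrees with or opposes the direction of traversal yields a telescoping sum $\sum \pm \mathbf{x}_e = 0$. Since these coefficients are all $\pm 1$ and not all zero, this is a nontrivial dependence, so $\{\mathbf{x}_e : e \in S\}$ is not independent.

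For the reverse implication I would induct on $|S|$, with the base case $S = \emptyset$ trivial. If $S \neq \emptyset$ is a forest, then it contains a leaf $\ell$, that is, a vertex incident to exactly one edge $f \in S$. The coordinate $\chi_\ell$ then appears in $\mathbf{x}_f$ but in no other $\mathbf{x}_e$ with $e \in S$, so in any relation $\sum_{e \in S} c_e \mathbf{x}_e = 0$ the $\chi_\ell$-coordinate forces $c_f = 0$. Deleting $f$ leaves the forest $S \setminus \{f\}$, whose vectors are independent by the inductive hypothesis, whence the remaining $c_e$ all vanish as well.

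The one point requiring care, and the only genuine obstacle, is that the argument must be uniform across all characteristics, in particular characteristic $2$, where $+1 = -1$. This causes no trouble: the cycle relation stays nontrivial because its coefficients equal $1$ (not $0$) in every field, and the leaf argument uses only that the entry of $\mathbf{x}_f$ in coordinate $\chi_\ell$ is $\pm 1 \neq 0$, which holds in any field. Thus the same incidence representation realizes $M(G)$ over every $\mathbb{F}$, proving the proposition.
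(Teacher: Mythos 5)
Your proof is correct and is essentially the argument behind the result the paper cites: the paper does not prove this proposition itself but refers to Oxley, where the standard proof is exactly this signed vertex--edge incidence representation, with the cycle-telescoping and leaf-induction arguments showing that independent column sets correspond to acyclic edge sets over any field. Your explicit attention to characteristic $2$ is the right point of care, and nothing further is needed.
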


Given the hereditary property of matroids, it suffices to define a matroid by specifying its ground set $E$ and maximal independent sets, known as the \defn{bases} of the matroid. The \defn{dual matroid} of $M = (E, I)$, denoted $M^{\perp}$, is then defined to be the matroid on the same ground set $E$ with basis elements given by $\{E\backslash B \colon B \text{ is a basis of } M\}$. 

\begin{proposition} \cite[Corollary 2.2.9]{Oxley.2011}
	If a matroid $M$ is representable over a field $\mathbb{F}$, then $M^{\perp}$ is also representable over $\mathbb{F}$.
\end{proposition}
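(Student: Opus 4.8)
The plan is to realize $M^{\perp}$ concretely as the matroid of an explicit matrix built from a representation of $M$, working entirely inside $\mathbb{F}^n$ equipped with the standard (nondegenerate) dot product. Suppose $M$ has ground set $E$ with $|E| = n$ and rank $r$, and is represented over $\mathbb{F}$ by a matrix $A$ whose columns are indexed by $E$; discarding linearly dependent rows alters no dependence among columns, so I may assume $A$ has full row rank $r$. Let $V = \mathrm{rowspace}(A) \subseteq \mathbb{F}^n$, a subspace of dimension $r$, and let $V^{\perp}$ be its orthogonal complement, of dimension $n-r$; note $V^{\perp} = \ker A$. I take $A^{\perp}$ to be any matrix whose rows form a basis of $V^{\perp}$. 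Since $V^{\perp}$ is a subspace of $\mathbb{F}^n$ it has a basis with entries in $\mathbb{F}$, so the matroid $M[A^{\perp}]$ it represents is automatically representable over $\mathbb{F}$. The goal is then to prove $M[A^{\perp}] = M^{\perp}$.

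The first step is to translate the basis condition into a statement about $V$. For $S \subseteq E$ let $C_S = \{x \in \mathbb{F}^n : x_i = 0 \text{ for all } i \notin S\}$ be the coordinate subspace supported on $S$. For a subset $B \subseteq E$ with $|B| = r$, the columns of $A$ indexed by $B$ are linearly independent exactly when the $r \times r$ submatrix $A_B$ is invertible, which happens precisely when the coordinate projection $V \to C_B$ (identifying $C_B$ with $\mathbb{F}^B$) is an isomorphism, i.e. when $V \cap C_{E \setminus B} = 0$. Since $\dim V + \dim C_{E \setminus B} = r + (n-r) = n$, this is equivalent to the direct sum decomposition $V \oplus C_{E \setminus B} = \mathbb{F}^n$. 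Thus $B$ is a basis of $M$ if and only if $V \oplus C_{E \setminus B} = \mathbb{F}^n$, and by the identical argument applied to $A^{\perp}$, the set $E \setminus B$ is a basis of $M[A^{\perp}]$ if and only if $V^{\perp} \oplus C_B = \mathbb{F}^n$.

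The two conditions are matched by taking orthogonal complements. As the dot product on $\mathbb{F}^n$ is nondegenerate, one has $(C_{E \setminus B})^{\perp} = C_B$, and orthogonal complementation carries a decomposition $\mathbb{F}^n = U \oplus W$ to $\mathbb{F}^n = U^{\perp} \oplus W^{\perp}$, because $U^{\perp} \cap W^{\perp} = (U + W)^{\perp}$ and $U^{\perp} + W^{\perp} = (U \cap W)^{\perp}$. Applying this to $\mathbb{F}^n = V \oplus C_{E \setminus B}$ yields $\mathbb{F}^n = V^{\perp} \oplus C_B$, so $B$ is a basis of $M$ if and only if $E \setminus B$ is a basis of $M[A^{\perp}]$. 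By the definition of the dual matroid this is exactly the statement $M[A^{\perp}] = M^{\perp}$, whence $M^{\perp}$ is representable over $\mathbb{F}$.

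The step needing the most care is the last one, since over a field of positive characteristic the bilinear form may be isotropic on subspaces (one can have $V \cap V^{\perp} \neq 0$), so the inner-product intuition is not automatic. What actually drives the argument is only that the dot product is a nondegenerate bilinear form on $\mathbb{F}^n$: this guarantees $\dim U^{\perp} = n - \dim U$ for every subspace $U$ together with the two De Morgan identities above, all valid regardless of the characteristic and of whether individual subspaces are isotropic. A more computational alternative, which avoids the complementation identities, is to put $A$ in reduced form $[\,I_r \mid D\,]$ after relabeling $E$, set $A^{\perp} = [\,-D^{\top} \mid I_{n-r}\,]$, check $A(A^{\perp})^{\top} = 0$, and match maximal minors of the two matrices through a complementary-minor determinant identity; I would nonetheless favor the coordinate-subspace argument above, as it is basis-free and makes the role of nondegeneracy transparent.
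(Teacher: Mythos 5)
Your proof is correct. Note first that the paper itself offers no argument for this proposition: it is quoted verbatim from Oxley \cite[Corollary 2.2.9]{Oxley.2011}, so there is no in-paper proof to compare against; the relevant comparison is with the standard textbook proof. Oxley's route (Theorem 2.2.8 in that book) is exactly the ``computational alternative'' you sketch in your final paragraph: put the representation in standard form $[\,I_r \mid D\,]$, declare $[\,-D^{\top} \mid I_{n-r}\,]$ to be the dual representation, and verify the basis correspondence via complementary nonsingular submatrices. Your main argument is a basis-free repackaging of the same idea: identifying ``$B$ is a basis of $M$'' with the decomposition $V \oplus C_{E\setminus B} = \mathbb{F}^n$, and then transporting decompositions through orthogonal complementation. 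Each step checks out: the reduction to full row rank preserves column dependencies since it preserves $\ker A$; the equivalence ``$A_B$ invertible $\iff V \cap C_{E\setminus B} = 0$'' follows from the dimension count $\dim V = |B| = r$; and your handling of positive characteristic is the right point to flag --- the argument never needs $V \oplus V^{\perp} = \mathbb{F}^n$ (which can fail), only nondegeneracy of the standard bilinear form on all of $\mathbb{F}^n$, which gives $\dim U^{\perp} = n - \dim U$, $U^{\perp\perp} = U$, and the two De Morgan identities in every characteristic. You also implicitly use that $M[A^{\perp}]$ has rank $n-r$, so all its bases have size $n-r$ and are therefore covered by your equivalence; that is worth stating explicitly but is immediate from $A^{\perp}$ having full row rank. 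What your version buys is transparency about where nondegeneracy enters and freedom from coordinate bookkeeping; what Oxley's version buys is an explicit matrix for the dual representation, which is often what one wants in applications.
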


For a given subset $S\subseteq E$, we define its \defn{rank} with respect to the matroid $M = (E, \mathcal{I})$ to be $\rank_{M}(S) = \max\{|I|\colon I \subseteq S \text{ and } I \in \mathcal{I} \}$. Furthermore, we define the rank $r_M$ of a matroid $M$ to be the rank of any basis of $M$, which is well-defined by the exchange property. Using the rank function, we define the \defn{Tutte polynomial} of $M$, denoted $T_M(x,y)$ as follows
\begin{equation}
	T_M(x,y) = \sum_{S\subseteq E} (x-1)^{r_M-\rank_M(S)}(y-1)^{|S|-\rank_M(S)}.
\end{equation}

This definition of the Tutte polynomial generalizes the original definition on graphs. More specifically, we have that the Tutte polynomial of the graphic matroid $M(G)$ is the same as the original Tutte polynomial on the graph $G$, see ~\cite{Biggs.1993} for more information on the Tutte polynomial of graphs. In addition, the Tutte polynomial interacts nicely with the dual of a matroid, namely, we have for all matroids $M$
\begin{equation}  \label{eq:dual}
T_{M}(x,y) = T_{M^{\perp}}(y,x).
\end{equation}

Recent results on the log-concavity of matroidal sequences can be understood as the log-concavity of various specializations of the Tutte polynomial. The most important log-concave matroidal invariant for the purposes of this paper is the following result of Huh.
\begin{theorem} \cite{Huh.2015} \label{thm:lcmatroid}
Let $M$ be any representable matroid over a field $\mathbb{F}$ of characteristic zero. Then $T_{M}(x,1)$ is log-concave.
\end{theorem}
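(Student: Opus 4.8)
The plan is to first translate the statement into a claim about the $h$-vector of the matroid, and then realize that $h$-vector as a sequence of intersection numbers to which an algebro-geometric log-concavity inequality applies.

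First I would unwind the definition of the Tutte polynomial at $y=1$. Setting $y=1$ kills every subset $S$ with $|S| > \rank_M(S)$, since such an $S$ contributes a positive power of $y-1 = 0$; only independent sets survive (where $\rank_M(S) = |S|$ and the exponent of $y-1$ is zero). Hence
\begin{equation}
T_M(x,1) = \sum_{S \in \mathcal{I}} (x-1)^{r_M - |S|} = \sum_{k=0}^{r_M} I_k\,(x-1)^{r_M - k},
\end{equation}
where $I_k$ is the number of independent sets of size $k$. Writing $T_M(x,1) = \sum_{i=0}^{r_M} h_i\, x^{r_M - i}$, the coefficient sequence $(h_0, \ldots, h_{r_M})$ is exactly the $h$-vector of the independence complex of $M$. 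Thus the theorem is equivalent to the log-concavity of this $h$-vector.

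Next I would exploit representability to give the $h$-vector a geometric meaning. Since $M$ is representable over a field $\mathbb{F}$ of characteristic zero, fix such a representation — a configuration of vectors spanning a subspace, equivalently an essential hyperplane arrangement. The key step is to produce a $d$-dimensional irreducible projective variety $X$ (built from the representation, as a closure of the graph of the projection or ``reciprocal'' rational map attached to the arrangement) sitting inside a product $\mathbb{P}^a \times \mathbb{P}^b$, together with the two classes $\alpha, \beta$ obtained by restricting the hyperplane classes of the two factors, arranged so that each $h_i$ is recovered as the bidegree $\deg(\alpha^{i}\beta^{d-i})$ on $X$. Characteristic zero is essential here precisely because it licenses resolution of singularities and the Hodge-theoretic input needed in the final step.

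Finally, with the $h_i$ realized as bidegrees of an irreducible variety, I would invoke the Khovanskii--Teissier inequality — the algebraic analogue of the Alexandrov--Fenchel inequality — which asserts that for nef classes $\alpha, \beta$ on a $d$-dimensional irreducible projective variety over a field of characteristic zero the sequence $i \mapsto \deg(\alpha^{i}\beta^{d-i})$ is log-concave, that is $\deg(\alpha^{i}\beta^{d-i})^2 \ge \deg(\alpha^{i-1}\beta^{d-i+1})\deg(\alpha^{i+1}\beta^{d-i-1})$; the restricted classes are nef because restrictions of nef classes are nef. Applying this term by term gives $h_i^2 \ge h_{i-1}h_{i+1}$, completing the argument. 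The main obstacle is the middle step: constructing $X$ and the two nef classes so that the bidegrees literally reproduce the $h$-numbers, and verifying the requisite positivity. Once this dictionary is in place the log-concavity is nothing more than the Khovanskii--Teissier inequality.
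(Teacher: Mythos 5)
First, a point of calibration: the paper does not prove this statement at all --- it is imported as a black box from \cite{Huh.2015}, and the rest of the paper (Theorem~\ref{thm:main} in particular) only uses it. So your attempt can only be measured against Huh's actual argument, not against anything in this paper. Measured that way, your opening step is correct and cleanly executed: at $y=1$ only independent sets survive in the Tutte sum, so $T_M(x,1)=\sum_k I_k(x-1)^{r_M-k}$, and its coefficient sequence is the $h$-vector of the independence complex of $M$. Your endgame (Khovanskii--Teissier for two nef classes on an irreducible projective variety) is also the same as Huh's. But there is a genuine gap exactly where you flag ``the main obstacle'': the dictionary identifying each $h_i$ with a bidegree $\deg(\alpha^i\beta^{d-i})$ is not a verification one can wave at --- it is the entire content of \cite{Huh.2015}. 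Huh's actual route is moreover structured differently from your sketch: he proves log-concavity of the $h$-vector of the \emph{broken circuit complex} of a representable matroid, realizing those numbers as bidegrees of the variety of critical points attached to the arrangement (the closure of the graph of the logarithmic gradient map), and then deduces the independence-complex statement needed here from the fact that the independence complex of $M$ is the broken circuit complex of the free coextension of $M$, which is again representable over the same field. Without that reduction, or an equally substantial substitute for the dictionary, the sketch is a correct plan rather than a proof.

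Second, your attribution of where characteristic zero enters is off. The Khovanskii--Teissier inequality for nef classes holds over any algebraically closed field (it reduces to the Hodge index theorem for surfaces, which is characteristic-free); this is precisely why the Huh--Katz theorem on characteristic polynomials works for matroids realizable over arbitrary fields. In \cite{Huh.2015}, characteristic zero is needed in the \emph{dictionary} step: the identification of the bidegrees with the $h$-numbers rests on complex-geometric and topological input about arrangement complements (Milnor-fiber and nonresonance-type theorems), established over $\mathbb{C}$ and transferred to all characteristic-zero fields by the Lefschetz principle. So the hypothesis of the theorem is consumed exactly in the step your proposal leaves as a black box, not in the final inequality, and a complete write-up along your lines would have to confront that.
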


\begin{remark} 
In fact, Ardila, Denham, and Huh ~\cite{ArdDenHuh.2023} have proven that $T_{M}(x,1)$ is log-concave for all matroids $M$. However, we will not need this full generalization as we will be able to derive the log-concavity of parking functions from Huh's original result.
\end{remark}

%%%%%%%%%%%%%%%%%%%%%%%%%%%%%%%%%%%%%%%%%%%%%%%%%%%%
\section{Results}
\label{section.results}
%%%%%%%%%%%%%%%%%%%%%%%%%%%%%%%%%%%%%%%%%%%%%%%%%%%%

In Section~\ref{section.main}, we give of a proof of Conjecture ~\ref{conj.main}. In Section~\ref{section.gpark}, we introduce the notion of a $G$-parking function and prove their log-concavity.
%%%%%%%%%%%%%%%%%%%%%%%%%%%%%%%%%%%%%%%%%%%%%%%%%%%%
\subsection{Log-concavity of parking functions}
\label{section.main}

We now prove the log-concavity of parking functions graded by the $\tsum$ statistic.

\begin{theorem} \label{thm:main}
	The polynomial $P_n(x)$ is log-concave for all $n\geq 1$.
\end{theorem}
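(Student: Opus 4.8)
The plan is to reduce the statement to a known matroidal log-concavity result by routing through the inversion enumerator $I_{n+1}(x)$ and realizing that polynomial as a specialization of the Tutte polynomial of the complete graph $K_{n+1}$. The three ingredients are: a reversal argument that makes $P_n$ and $I_{n+1}$ equivalent for log-concavity purposes; a classical identity expressing $I_{n+1}$ as a Tutte evaluation; and matroid duality, which lets me hit the variable in which Huh's Theorem~\ref{thm:lcmatroid} is stated.

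First I would use Equation~\eqref{eq:pftree} to pass from $P_n$ to $I_{n+1}$. Writing $I_{n+1}(y) = \sum_k a_k y^k$, Equation~\eqref{eq:pftree} gives $P_n(x) = x^{\binom{n+1}{2}} \sum_k a_k x^{-k} = \sum_k a_k x^{\binom{n+1}{2}-k}$, so as $k$ increases the exponent $\binom{n+1}{2}-k$ decreases; the coefficient sequence of $P_n$ is therefore the reversal of that of $I_{n+1}$, placed at degrees between $n$ and $\binom{n+1}{2}$. Since the defining inequalities $(a_i)^2 \geq a_{i-1}a_{i+1}$ are invariant under reversing a sequence and under multiplication by a monomial, it suffices to prove that $I_{n+1}(x)$ is log-concave.

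Next comes the key input: the classical identity $I_{n+1}(y) = T_{K_{n+1}}(1,y)$ relating the inversion enumerator of labeled trees on $\{0,1,\ldots,n\}$ to the Tutte polynomial of $K_{n+1}$ evaluated along $x=1$. From the rank-generating definition of $T$, setting $x=1$ kills every term except those indexed by connected spanning subgraphs, graded by the number of edges in excess of a spanning tree, and this grading matches $\inv$. In matroid language this reads $I_{n+1}(y) = T_{M(K_{n+1})}(1,y)$. I would then apply Huh's Theorem~\ref{thm:lcmatroid}, with the one subtlety that Huh's result concerns $T_M(x,1)$ whereas our evaluation is at the other variable. This is exactly what duality resolves: by Equation~\eqref{eq:dual}, $T_{M(K_{n+1})}(1,y) = T_{M(K_{n+1})^{\perp}}(y,1)$. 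Because $M(K_{n+1})$ is representable over every field, and hence over a field of characteristic zero, its dual $M(K_{n+1})^{\perp}$ is representable there as well, so Theorem~\ref{thm:lcmatroid} applies to $M(K_{n+1})^{\perp}$ and shows that $T_{M(K_{n+1})^{\perp}}(y,1) = I_{n+1}(y)$ is log-concave. Combined with the reversal argument, this gives the log-concavity of $P_n(x)$.

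The main obstacle is the identity $I_{n+1}(y) = T_{K_{n+1}}(1,y)$; everything else is either bookkeeping (the reversal) or a direct invocation of the stated facts (representability of duals and Huh's theorem). I would either cite this Tutte-polynomial identity from the literature or verify it from the rank-generating-function definition of $T$, taking care to match grading conventions so that the excess-edge count on connected spanning subgraphs lines up with $\inv$, and to ensure the duality swap in Equation~\eqref{eq:dual} lands us in the variable for which Theorem~\ref{thm:lcmatroid} is phrased.
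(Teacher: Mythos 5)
Your proposal is correct and follows essentially the same route as the paper's own proof: reduce to $I_{n+1}$ via Equation~\eqref{eq:pftree}, identify it with $T_{M(K_{n+1})}(1,y)$, swap variables by matroid duality using Equation~\eqref{eq:dual}, and apply Theorem~\ref{thm:lcmatroid} to the dual matroid, which is representable since graphic matroids are representable over every field. The only differences are cosmetic: you spell out the coefficient-reversal bookkeeping and sketch why the Tutte identity holds, whereas the paper simply cites these facts.
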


\begin{proof}
	By Equation ~\ref{eq:pftree}, it suffices to prove that the polynomial $I_{n}(y)$ is log-concave for all $n\geq 1$. The polynomial $I_n(y)$ is known to be equivalent to $T_{M(K_n)}(1,y)$ where $K_n$ is the complete graph on $n$ vertices ~\cite[13f]{Biggs.1993}. This is then equivalent to $T_{M(K_n)^{\perp}}(y,1)$ via Equation ~\ref{eq:dual}. As $M(K_n)$ is representable over every field, so is its dual $M(K_n)^{\perp}$. Thus, $I_n(y) = T_{M(K_n)^{\perp}}(y,1)$ is log-concave by Theorem ~\ref{thm:lcmatroid} which gives us the desired result.
\end{proof}

Given a labeled graph $G$, let $e(G)$ denote the number of edges in $G$ and let $C_n(x) = \sum_{G} x^{e(G)}$ where the sum ranges over all labeled, connected simple graphs with $n$ vertices. We obtain the following result as a corollary.

\begin{corollary}
	The polynomial $C_n(x)$ is log-concave for all $n\geq1$.
\end{corollary}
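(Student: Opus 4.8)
The plan is to mimic the proof of Theorem~\ref{thm:main}, expressing $C_n(x)$ as a specialization of the Tutte polynomial of a representable matroid and then invoking Huh's theorem. The key observation is that the generating function for connected labeled graphs on $n$ vertices by number of edges is, up to a shift, exactly a one-variable specialization of the Tutte polynomial of the graphical matroid $M(K_n)$.

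\medskip

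First I would recall the standard combinatorial interpretation of the Tutte polynomial of $K_n$: a connected spanning subgraph of $K_n$ on the vertex set $\{1,\dots,n\}$ is precisely a connected labeled simple graph on $n$ vertices, and such subgraphs are enumerated by the specialization $T_{M(K_n)}(1,y)$. Concretely, in the expansion
\begin{equation}
T_{M(K_n)}(1,y) = \sum_{S \subseteq E_{K_n}} (1-1)^{r-\rank(S)}(y-1)^{|S|-\rank(S)},
\end{equation}
setting $x=1$ forces $\rank(S)=r$, i.e. only the spanning connected subsets $S$ survive (those whose edge set spans a connected graph on all $n$ vertices), and for such $S$ the rank equals $n-1$. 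Thus $T_{M(K_n)}(1,y)=\sum_{S} (y-1)^{|S|-(n-1)}$ where $S$ ranges over connected spanning subgraphs. Because every connected labeled simple graph on $n$ vertices arises as exactly one such $S$, a binomial-coefficient reindexing of the $(y-1)^{|S|-(n-1)}$ terms against $C_n(x)=\sum_G x^{e(G)}$ shows that $C_n(x)$ and $T_{M(K_n)}(1,y)$ agree after the substitution $y \mapsto x+1$ together with multiplication by $x^{n-1}$; I would verify this small identity explicitly, since it is the linchpin connecting the graph enumerator to the matroid invariant.

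\medskip

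Having identified $C_n(x)$ with (a shift of) $T_{M(K_n)}(1,y)$, I would then apply exactly the duality-and-representability argument from Theorem~\ref{thm:main}: by Equation~\ref{eq:dual} we have $T_{M(K_n)}(1,y)=T_{M(K_n)^{\perp}}(y,1)$, and since $M(K_n)$ is representable over every field so is its dual $M(K_n)^{\perp}$. Huh's Theorem~\ref{thm:lcmatroid} then gives log-concavity of $T_{M(K_n)^{\perp}}(y,1)$, and hence of $C_n(x)$.

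\medskip

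The main obstacle I anticipate is purely bookkeeping rather than conceptual: establishing the precise relationship between $C_n(x)$ and the specialized Tutte polynomial, including the correct power-of-$x$ shift and the substitution $y\mapsto x+1$, and then confirming that log-concavity is preserved under these operations. Multiplication by a power of $x$ clearly preserves log-concavity of the coefficient sequence, but I would need to be careful that the affine substitution $y\mapsto x+1$ does not spoil the argument. The cleanest route, which I would take, is to avoid substituting into a log-concave polynomial directly and instead recognize $C_n(x)$ \emph{itself} as the coefficient sequence of $T_{M(K_n)}(1,y)$ read in the variable $y-1$; in other words, to phrase the identification so that the very sequence whose log-concavity Huh's theorem guarantees is (up to reindexing) the sequence of edge-counts defining $C_n(x)$. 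Pinning down that identification carefully is the crux, after which the corollary follows immediately.
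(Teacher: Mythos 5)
Your Tutte-polynomial identity is correct, and you have put your finger on exactly the right crux, but your proposed resolution of that crux is where the argument breaks. Setting $x=1$ in the corank--nullity expansion does kill every subset of rank less than $r=n-1$, giving
\[
T_{M(K_n)}(1,y)=\sum_{S}(y-1)^{|S|-(n-1)},
\]
the sum ranging over connected spanning subgraphs $S$ of $K_n$, and hence $C_n(x)=x^{n-1}T_{M(K_n)}(1,1+x)$; this is a clean re-derivation of the identity $C_n(x)=x^{n-1}I_n(1+x)$ of Gessel and Wang \cite{GesselWang.1979} that the paper's proof quotes. The gap is in your final sentence. Theorem~\ref{thm:lcmatroid}, applied through Equation~\ref{eq:dual} exactly as in Theorem~\ref{thm:main}, guarantees log-concavity of the coefficients of $T_{M(K_n)}(1,y)$ \emph{in the standard monomial basis}, i.e.\ of the sequence $(a_j)$ with $T_{M(K_n)}(1,y)=\sum_j a_jy^j$ (this is just $I_n$ again). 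The sequence your identification matches with the edge-counts in $C_n(x)$ is a different one: the sequence $(c_k)$ with $T_{M(K_n)}(1,y)=\sum_k c_k(y-1)^k$. These are \emph{not} equal up to reindexing; they are related by the binomial transform $c_k=\sum_j\binom{j}{k}a_j$. Already for $n=3$ one sees the difference: $T_{M(K_3)}(1,y)=2+y$, so $(a_j)=(2,1)$, whereas $2+y=3+(y-1)$ gives $(c_k)=(3,1)$, matching $C_3(x)=3x^2+x^3$. So ``reading the coefficients in the variable $y-1$'' does not avoid the substitution $y\mapsto 1+x$; it \emph{is} that substitution, and Huh's theorem as stated says nothing directly about the $(y-1)$-basis coefficients.

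What closes the gap is a theorem asserting that log-concavity survives this binomial transform, and that is precisely what the paper invokes: Brenti's result \cite{Brenti.1994} that if $f$ is log-concave with nonnegative coefficients and no internal zeros, then $f(1+x)$ is again log-concave. Some such input is unavoidable, because the implication is not formal: for instance $f(x)=1+100x^2+x^4$ has nonnegative coefficients, yet $f(1+x)=102+204x+106x^2+4x^3+x^4$ fails log-concavity at the $x^3$ coefficient, since $4^2<106\cdot 1$. With Brenti's theorem in hand, your argument is repaired exactly as in the paper: $I_n$ is log-concave (by Theorem~\ref{thm:main}) with no internal zeros, so $I_n(1+x)$ is log-concave, and multiplication by $x^{n-1}$ merely shifts the coefficient sequence. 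Alternatively, your ``direct'' route could be salvaged by a strictly stronger matroid input: connected spanning subgraphs of $K_n$ are exactly the complements of independent sets of $M(K_n)^{\perp}$, so the coefficients of $C_n(x)$ form (in reverse) the $f$-vector of the independence complex of a representable matroid, whose log-concavity is a known theorem of Lenz and of Adiprasito--Huh--Katz; but it does not follow from Theorem~\ref{thm:lcmatroid} alone by a change of basis.
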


\begin{proof}
	The polynomial $C_n(x)$ is related to $I_n(x)$ via the equation $C_n(x) = x^{n-1}I_n(1+x)$, see ~\cite{GesselWang.1979} for a combinatorial proof of this identity. Thus, it suffices to show that $I_n(1+x)$ is log-concave. Given a polynomial $f(x) = a_0 + a_1x\ldots + a_mx^m$ over the nonnegative reals with no internal zeros (i.e. with no three indices $i < j < k$ such that $a_i, a_k \not = 0$ and $a_j = 0$), the shifted polynomial $f(1+x)$ is also log-concave ~\cite{Brenti.1994}. Given that $I_n(x)$ is log-concave with no internal zeros, we have $C_n(x)$ is also log-concave.
\end{proof}

%%%%%%%%%%%%%%%%%%%%%%%%%%%%%%%%%%%%%%%%%%%%%%%%%%%%
\subsection{Log-concavity of \texorpdfstring{$G$}{G}-parking functions}
\label{section.gpark}

A $G$-parking function is a generalization of parking functions introduced by Postnikov and Shapiro ~\cite{PostSha.2004}. Let $G$ be an undirected, connected graph on the vertex set $\{0, 1, \ldots, n\}$ with possible multiedges, but no loops. Given $I \subseteq \{1, 2, \ldots, n\}$, let $d_I(i)$ be the number of edges between vertex $i$ and vertices not in the subset $I$. A \defn{$G$-parking function} is a sequence $(a_1, a_2, \ldots, a_n)$ of positive integers such that for any nonempty subset $I\subseteq \{1,2,\ldots, n\}$, there exists $i\in I$ such that $a_i \leq d_I(i)$. Note that ordinary parking functions of length $n$ are exactly the $G$-parking functions for the complete graph $K_{n+1}$ on the vertex set $\{0, 1, \ldots, n\}$. 

As before with ordinary parking functions, we will grade $G$-parking functions by their sum and denote their associated generating function by $P_{G}(x)$. This polynomial is essentially equivalent to the level generating function for the abelian sandpile model/chip firing game, see ~\cite{Yan.2015} for details. Thus, the following result also holds for the level enumerator of the abelian sandpile model.

\begin{theorem}
Let $G$ be an undirected, loopless, connected graph. Then $P_{G}(x)$ is log-concave.
\end{theorem}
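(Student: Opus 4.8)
The plan is to mirror the strategy used in the proof of Theorem~\ref{thm:main}, replacing the complete graph $K_{n+1}$ with the graph $G$ and the inversion enumerator $I_n(y)$ with the appropriate Tutte-polynomial specialization of the graphical matroid $M(G)$. The first step is to identify the generating function $P_G(x)$ with a one-variable specialization of the Tutte polynomial of $M(G)$. Recall that $G$-parking functions are in bijection with the spanning trees of $G$ (a result of Postnikov and Shapiro~\cite{PostSha.2004}), and under this bijection the $\tsum$ statistic corresponds, up to a global shift, to the external activity statistic on spanning trees. Since the external-activity generating function over spanning trees is exactly the specialization $T_{M(G)}(1,y)$ of the Tutte polynomial, I expect an identity of the form $P_G(x) = x^{c} \, T_{M(G)}(1, x^{-1})$ for an appropriate constant $c$ (with $c = \binom{n+1}{2}$ recovering Equation~\ref{eq:pftree} in the complete-graph case). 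I would cite~\cite{Yan.2015} for the connection to the sandpile model, whose level generating function is the well-known evaluation $T_{M(G)}(1,y)$ up to normalization.

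Once this identification is in place, the log-concavity argument is essentially identical to the one for $P_n(x)$. The reversal $x \mapsto x^{-1}$ preserves log-concavity (reversing a log-concave sequence yields a log-concave sequence), and multiplication by a monomial $x^{c}$ only shifts indices, so it suffices to prove that $T_{M(G)}(1,y)$ is log-concave in $y$. Applying the duality relation~\eqref{eq:dual}, I rewrite $T_{M(G)}(1,y) = T_{M(G)^{\perp}}(y,1)$. Because $G$ is a graph, its graphical matroid $M(G)$ is representable over every field, and by the cited proposition so is the dual $M(G)^{\perp}$; in particular $M(G)^{\perp}$ is representable over a field of characteristic zero. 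Huh's Theorem~\ref{thm:lcmatroid} then applies directly to conclude that $T_{M(G)^{\perp}}(y,1)$ is log-concave, which gives the log-concavity of $P_G(x)$.

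The main obstacle I anticipate is not the matroid machinery, which transfers verbatim, but rather pinning down the precise statistic-to-Tutte-evaluation dictionary in the presence of multiedges. The definition of $G$-parking functions allows multiple edges (but no loops), so the graphical matroid $M(G)$ may have parallel elements, and I would need to confirm that the $\tsum$ statistic still matches the external-activity grading under the Postnikov--Shapiro bijection in this generality, rather than only for simple graphs. A second, more technical point is verifying that the shift constant $c$ and the direction of the variable reversal are consistent, so that the log-concavity of $T_{M(G)}(1,y)$ genuinely transports to $P_G(x)$ rather than to its reversal under an incorrect normalization. Provided the identity $P_G(x) = x^{c}\,T_{M(G)}(1,x^{-1})$ holds as stated, the remainder of the proof is a direct reprise of Theorem~\ref{thm:main}.
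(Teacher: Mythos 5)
Your proposal is correct and takes essentially the same approach as the paper: the paper cites \cite{CorLeBo.2003, Lopez.1997} for the identity $T_{M(G)}(1,y) = y^{e(G)}P_{G}(y^{-1})$ (so your constant is $c = e(G)$, and the identity does hold with multiedges), and then repeats the duality--representability--Huh argument of Theorem~\ref{thm:main} verbatim. The two points you flag as potential obstacles are exactly what those cited references settle, so nothing further is needed.
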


\begin{proof}
	We have $T_{M(G)}(1,y) = y^{e(G)}P_{G}(y^{-1})$ by ~\cite{CorLeBo.2003, Lopez.1997} where $e(G)$ is the number of edges in $G$. Hence, by the same argument as Theorem ~\ref{thm:main}, we have $P_{G}(x)$ is log-concave.
\end{proof}

%%%%%%%%%%%%%%%%%%%%%%%%%%%%%%%%%%%%%%%%%%%%%%%%%%%%
\bibliographystyle{alpha}
\bibliography{pf_lc}{}

\end{document}